\begin{document}

\title[ on a nonclassical boundary  problem for the Laplace equation]
{On a nonclassical boundary  problem for the Laplace equation in a
 cracked plane}

\author[M. Krasnoshchok]
{Mykola Krasnoshchok}

\address{Institute of Applied Mathematics and Mechanics of NASU
\newline\indent
G.Batyuka st. 19, 84100 Sloviansk, Ukraine} \email[M.
Krasnoshchok]{iamm012@ukr.net}

\subjclass[2000]{Primary 35B65; Secondary 35J25} \keywords{a priori
estimates, crack, weighted Sobolev spaces}

\begin{abstract}
We consider the Laplace equation in a cracked plane with a nonclassical boundary conditions. This problem arises as a model of the flow in the fractured media.  The main result is the theorem of existence and uniqueness of a solution in weighted Sobolev spaces.
\end{abstract}

\maketitle

\numberwithin{equation}{section}
\newtheorem{theorem}{Theorem}[section]
\newtheorem{lemma}[theorem]{Lemma}
\newtheorem{definition}[theorem]{Definition}
\newtheorem{proposition}[theorem]{Proposition}
\newtheorem{remark}[theorem]{Remark}
\allowdisplaybreaks

\section{Introduction}
\label{s1}

 \noindent Consider the plane containing a crack $\Sigma=\left\{(x_1,x_2): x_1<0, x_2=0\right\}$. Let us denote  by $\gamma_+$ and $\gamma_-$ the trace operators on $\Sigma$ from the upper and lower sides of $\Sigma$ respectively. This paper is concerned with the problem
 to find a pair $(\mathrm{p}(x),\mathrm{q}(x_1))$, such that
\begin{equation}\label{g1.1}
\Delta\mathrm{p}(x)=0, \mbox{ in } \Omega=\mathbb{R}^2\setminus \Sigma,
\end{equation}
\begin{equation}\label{g1.2}
\gamma_+\mathrm{p}+\gamma_-\mathrm{p}-2\mathrm{q}=0, \mbox{ on  } \Sigma,
\end{equation}
\begin{equation}\label{g1.3}
\gamma_+(\kappa_1  \mathrm{p}_{x_2}-\mathrm{p})+\gamma_-(\kappa_1
\mathrm{p}_{x_2}+ \mathrm{p})=0, \mbox{ on  } \Sigma,
\end{equation}
\begin{equation}\label{g1.4}
-\mathrm{q}_{x_1x_1}-\kappa_2(\gamma_+\mathrm{p}_{x_2}-
\gamma_- \mathrm{p}_{x_2})=\mathrm{f}, \mbox{ on  } \Sigma,
\end{equation}
\begin{equation}\label{g1.5}
\mathrm{q}_{x_1}(0)=0
\end{equation}
\begin{equation}\label{g1.6}
\mathrm{p}, \mathrm{q}\to 0, \quad |x|\to \infty,
\end{equation}
here $\kappa_1>0$ and $\kappa_2>0$.

We may consider \eqref{g1.1}-\eqref{g1.6} as a model problem for the
flow in the fractured media (cf. system (4.1) in \cite{MJR} with
parameter $\xi=1/2$). In this context, $\mathrm{q}$ is the pressure  of a fluid in the fracture $\Sigma$ and $\mathrm{p}$ is a pressure in the surrounding media $\Omega$ (cf. \cite{ABH}).

Different aspects of the theory of linear elliptic boundary value problems in non-smooth domains were discussed in \cite{KO1}, \cite{BK1},
\cite{GR}, \cite{KS2}, \cite{NZ}, and, in a more special case of cracks, in
\cite{DG1},\cite{CD}, \cite{KS1}, \cite{KHVK}, \cite{KHKZ}, \cite{ACN},
\cite{MN1} and in the references therein.  The papers
\cite{M1},\cite{M2}, \cite{KT1} are devoted to the application of
the potential theory to the third boundary problem for the Laplace equation on a bounded planar domain with inside crack.

In \cite{SF1}, the authors undertook a detailed study of the Poisson's equation in   infinite planar angle under the Dirichlet boundary condition on the one side of the angle and the oblique derivative boundary condition with a complex parameter on the other side of the angle. Using the Mellin transform, they reduced that problem to a first order difference equation in the complex plane. Their approach provides a unified way of treating a whole variety of elliptic problems with nonhomogeneous symbol in corner domains (see \cite{SF2}, \cite{BF}, \cite{BV1}, \cite{DV}, \cite{JBJ} and the references therein). Our argument is an adaptation of some ideas in \cite{SF1}, \cite{BF}. We repeat the material from \cite{SF1} without proof, thus making our exposition self-contained. We emphasize that the main difficulty consists in verifying of boundary condition \eqref{g1.5}.

The main result of this paper is the following theorem (see Section
2 for the definitions of the weighted Sobolev spaces $H^k_\mu(\Omega)$,
$H^{k+1/2}_\mu(\Sigma)$).

\begin{theorem}\label{BT2.1}
Let $k\geq 0$, $\mu\in \mathbb{R}$, and
\begin{equation}\label{g2.1}
0<\mu-k-1<\frac 1 2.
\end{equation}
Let $\mathrm{f}\in H^{k+1/2}_\mu(\Sigma)$ and, additionally,
\begin{equation}\label{g2.2}
\int\limits_{-\infty}^0\mathrm{f}(x_1)dx_1=0.
\end{equation}
Then there exists a unique solution
$(\mathrm{p}(x),\mathrm{q}(x_1))$ to the problem
\eqref{g1.1}-\eqref{g1.6} such that
\begin{equation}\label{g2.3}
\Vert \mathrm{p}\Vert_{H^{k+2}_\mu(\Omega)}+\Vert
\mathrm{q}\Vert_{H^{k+3/2}_\mu(\Sigma)}+\Vert
\mathrm{q}_x\Vert_{H^{k+3/2}_\mu(\Sigma)}\leq C \Vert
\mathrm{f}\Vert_{H^{k+1/2}_\mu(\Sigma)},
\end{equation}
here $C$ does not depend on $f$.
\end{theorem}

The paper is organized as follows. In Section 2, we define functional spaces and give some auxiliary results. The proof of the main result is splitted in two parts. Section 3 is devoted to integral representation of solution and estimate  \eqref{g2.3}. In  Section 4, we verify that obtained solution satisfies boundary condition \eqref{g1.5}. Appendices contain the proof of some technical results. Boundary conditions \eqref{g1.2}-\eqref{g1.4} are verified in Appendix A. Appendix B gives the proof of a key estimate, which we use in Section 4.

Throughout below, the symbol $C$ will denote a generic positive constant, depending only on the structural quantities of the problem.

\section{Functional spaces. Auxiliary results}
\label{s2}

Let us define the functional spaces used below (cf. \cite{GSV}, \cite{KO1}, \cite{NZ}, \cite{KMR2}). For $k\in \mathbb{N}$,  we define the Hilbert space $H^k(-\pi,\pi)$ with the norm
\[
\Vert u \Vert_{H^k(-\pi,\pi)}=\Biggl(\sum\limits^k_{l=0}\int\limits^\pi_{-\pi}\biggl|\frac{d^lu(\theta)}{d\, \theta^l}\biggr|^2d\theta\Biggr)^{\frac 1 2}.
\]Define the space
$H^k_\mu(\Omega)$ as  the closure of infinitely differentiable functions defined  in $\Omega$, vanishing near the origin and for large $|x|$, with respect to the norm  $(k\in \mathbb{N}\cup 0, \mu\in \mathbb{R})$
\begin{equation}\label{g3.1}
\Vert
\mathrm{p}\Vert_{H^k_\mu(\Omega)}=\left(\sum\limits_{|\alpha|\leq
k}\int\limits_\Omega
|x|^{2(\mu-k+|\alpha|)}|D^\alpha_x\mathrm{p}(x)|^2dx \right)^{\frac
1 2}.
\end{equation}

Passing to polar coordinates ($r>0$, $\theta\in (-\pi,\pi]$)
\begin{equation}\label{g3.2}
 x_1=r \cos \theta, \quad x_2=r \sin \theta, \quad \mathrm{p}(x)=p(r,\theta),
\end{equation}
 we define the norm
\begin{equation}\label{g3.3}
\Vert p \Vert_{k,\mu}=\left(\int\limits^\infty_0
r^{2(\mu-k)+1}\sum\limits^k_{l=0}\Vert \left(
r\frac{\partial}{\partial r}\right)^l p\Vert
^2_{H^{k-l}(-\pi,\pi)}dr\right)^{\frac 1 2},
\end{equation}
which is equivalent to the norm \eqref{g3.1} (see formula (6.1.4) in
\cite{KMR2}).

Define also $H^{k+1/2}_\mu(\mathbb{R_+})$ as the closure of $C^\infty$ functions with compact support in the norm
\begin{equation}\label{g3.4}
\Vert
h\Vert_{H^{k+1/2}_\mu(\mathbb{R_+})}=\left(\sum\limits^k_{l=0}\int\limits^\infty_0
r^{2(\mu-k+l)-1}\left|\frac{d^lh(r)}{dr^l}\right|^2dr+ \Vert
h\Vert_{L^{k+1/2}_\mu(\mathbb{R_+})}\right)^{\frac 1 2},
\end{equation}
where
\begin{equation}\label{g3.5}
\Vert
h\Vert_{L^{k+1/2}_\mu(\mathbb{R_+})}=\left(\int\limits^\infty_0
r^{2\mu}dr\int\limits^r_0\left|\frac{d^kh(r+\rho)}{dr^k}-\frac{d^kh(r)}{dr^k}\right|^2\frac{d\rho}{\rho^2}\right)^{\frac
1 2}.
\end{equation}


Let us say that a function $\mathrm{h}(x_1)$, being defined on $\Sigma$, belongs to $H^{k+1/2}_\mu(\Sigma)$ if $h(r)=\mathrm{h}(-r)$
belongs to $H^{k+1/2}_\mu(\mathbb{R}_+)$.

Define the Mellin transformation and its inverse
\begin{align}\label{g3.7}
\widetilde{h}(s)=\int\limits^\infty_0r^{s-1}h(r)dr,  s=s_1+is_2,\nonumber\\
h(r)=\frac 1 {2\pi
i}\int\limits^{s_1+i\infty}_{s_1-i\infty}r^{-s}\widetilde{h}(s)ds,
\end{align}
(see \cite{T1}, \cite{SF1} for theoretical background).

We will use the Parceval formula
\begin{equation}\label{g4.1}
\int\limits^\infty_0|h(r)|^2r^{2a-1}dr=\frac 1 {2\pi
i}\int\limits^{a+i\infty}_{a-i\infty}|\widetilde{h}(s)|^2\frac{ds}{i}
\end{equation}
and the Mellin convolution formula
\begin{equation}\label{g4.2}
\int^\infty_0h_1\left(\frac r t\right)h_2(t)\frac{dt}t=\frac 1 {2\pi
i}\int\limits^{c+i\infty}_{c-i\infty}r^{-s}\widetilde{h_1}(s)\widetilde{h_2}(s)ds,
\end{equation}
(see (3.1.15) and (3.1.13) in \cite{PK1}).

Upon integrations by parts one can prove that for any $h\in C^\infty_0(\mathbb{R}_+)$
\begin{equation}\label{g4.3}
\left(\widetilde{\frac {d^kh}{dr^k}}\right)(s)=(-1)^k(s-1)\dots(s-k)\widetilde{h}(s-k),
\end{equation}
\begin{equation}\label{g4.4}
\left(\widetilde{(r\frac \partial {\partial
r})^kh}\right)(s)=s^k\widetilde{h}(s).
\end{equation}

For $h\in H^{k+1/2}_\mu(\mathbb{R}_+)$, we have (see Proposition 3.2 in
\cite{SF1})
\begin{equation}\label{g4.5}
c_1(\mu)\int\limits^{\mu-k+i\infty}_{\mu-k-i\infty}|\widetilde{h}(s)|^2(1+|s|)^{2k+1}\frac{ds}i\leq
\Vert h\Vert^2_{H^{k+1/2}_\mu(\mathbb{R}_+)}\leq
c_2(\mu)\int\limits^{\mu-k+i\infty}_{\mu-k-i\infty}|\widetilde{h}(s)|^2(1+|s|)^{2k+1}\frac{ds}i,
\end{equation}
here $0<c_1\leq c_2$.

\section{Integral representation of solution and its estimate}

Let us set
\begin{equation}\label{g3.6}
    q(r)=\mathrm{q}(-r), \quad f(r)=\mathrm{f}(-r).
\end{equation}
Next, we rewrite \eqref{g1.1}-\eqref{g1.6} by adopting polar
coordinates \eqref{g3.2} and  \eqref{g3.6}
\begin{equation}\label{g5.1}
p_{rr}+\frac 1 r p_r+\frac 1 {r^2}p_{\theta\theta}=0, r>0, \theta\in (-\pi,\pi)
\end{equation}
\begin{equation}\label{g5.2}
p(r,\pi)+p(r,-\pi)-2q(r)=0, r>0,
\end{equation}
\begin{equation}\label{g5.3}
\frac {\kappa_1} {r} p_\theta(r,\pi)+p(r,\pi)+\frac {\kappa_1} {r}
p_\theta(r,-\pi)-p(r,-\pi)=0, r>0,
\end{equation}
\begin{equation}\label{g5.4}
-q_{rr}(r)+\frac{\kappa_2}r(p_{\theta}(r,\pi)- p_{\theta}(r,-\pi))=f(r), r>0,
\end{equation}
\begin{equation}\label{g5.5}
q_{r}(0)=0
\end{equation}
\begin{equation}\label{g5.6}
p, q\to 0, \quad r\to \infty.
\end{equation}

The Mellin transformation with respect to $r$ and formulas \eqref{g4.3}, \eqref{g4.4} lead
\eqref{g5.1}-\eqref{g5.4}  to the system
\begin{equation}\label{g5.7}
\widetilde{p}(s,\theta)+s^2\widetilde{p}(s,\theta)=0,
\end{equation}
\begin{equation}\label{g5.8}
\widetilde{p}(s,\pi)+\widetilde{p}(s,-\pi)-2\widetilde{q}=0,
\end{equation}
\begin{equation}\label{g5.9}
\kappa_1\left[  \widetilde{p}_\theta(s-1,\pi)+
\widetilde{p}_\theta(s-1,-\pi)\right]+\widetilde{p}(s,\pi)-\widetilde{p}(s,-\pi)=\widetilde{q}(s),
 \end{equation}
\begin{equation}\label{g5.10}
-(s-1)(s-2)\widetilde{q}(s-2)+\kappa_2(\widetilde{p}_{\theta}(s-1,\pi)-\widetilde{p}_{\theta}(s-1,-\pi))=\widetilde{f}(s)
.
\end{equation}

It seems appropriate to seek for $\widetilde{p}$, $\widetilde{q}$ in the form
\begin{equation}\label{g5.11}
\widetilde{p}(s,\theta)=\frac {\cos s\theta}{\sin
s\pi}d(s), \quad \widetilde{q}(s)=\cot s\pi d(s),
\end{equation}
with unknown $d(s)$. We have
\begin{align}\label{g5.12}
\widetilde{p}(s,\pm\pi)&=\cot s\pi d(s),
\nonumber \\
\widetilde{p}_\theta(s,&\theta)=-s\frac{\sin s\theta}{\sin s\pi}d(s),
\\ \widetilde{p}_\theta(s,&\pm\pi)=\mp sd(s).\nonumber
\end{align}
This allows us to satisfy \eqref{g5.7}-\eqref{g5.9} simultaneously and convert \eqref{g5.10} to the following difference equation
\begin{equation}\label{g5.13}
-(s-1)(s-2)\cot (s-2)\pi d(s-2)-2\kappa_2(s-1)d(s-1)=\widetilde{f}(s).
\end{equation}
Set
\begin{equation}\label{g6.0}
\kappa_0=2\kappa_2, s=\lambda+2,
\omega(\lambda)=\lambda\cot\lambda\pi.
\end{equation}
Then equation \eqref{g5.13} takes the form
\begin{equation}\label{g6.1}
\kappa_0d(\lambda+1)+\omega(\lambda)d(\lambda)=-\frac{\widetilde{f}(\lambda+2)}{\lambda+1},
\end{equation}
These equation was extensively
studied in \cite{SF1} in a more general situation. Let us sum up some key results.

One of the solutions to homogeneous equation
\begin{equation}\label{g6.2}
\kappa_0d(\lambda+1)+\omega(\lambda)d(\lambda)=0,
\end{equation}
is
\begin{equation}\label{g6.3}
d_0(\lambda)=\exp(i\lambda\pi)(\kappa_0\pi)^{1/2-\lambda}K(\lambda),
\end{equation}
here
\begin{equation}\label{g6.4}
K(\lambda)=\prod\limits^\infty_{n=1}\frac{\Gamma(n-\frac 1
2+\lambda)\Gamma(n+1-\lambda)}{\Gamma(n+\frac 1
2-\lambda)\Gamma(n+\lambda)}\left(\frac n
{n-1/2}\right)^{2\lambda-1},
\end{equation}
and $\Gamma(\cdot)$ is the Euler gamma function.  $\Gamma(\lambda)$
has the simple poles in $\lambda=-m$, $m\in \mathbb{N}\cup 0$, and,
for example, $\Gamma(\lambda)\Gamma(\lambda+1)$ have the simple pole
in $\lambda=0$ and the pole of the second order in $\lambda=-1$.
Therefore, $K(\lambda)$ has the poles of the order $m+1$ in
$\lambda=-\frac 1 2 -m$, $\lambda=2+m$ and the zeroes of the order
$m+1$ in $\lambda=\frac 3 2+m$, $\lambda=-1-m$, $m\in \mathbb{N}\cup
0$.   That's why $K(\lambda)$  is analytic in the strip
\begin{equation}\label{g6.7}
-\frac 1 2<\mathrm{Re}\lambda< 2.
\end{equation}
\noindent$\frac 1{K(\lambda)}$ is analytic in the strip
\begin{equation}\label{g6.8}
-1<\mathrm{Re}\lambda<\frac 3 2.
\end{equation}
\noindent$K_0(\lambda)=\frac 1 {(\lambda+1)K(\lambda+1)}$ is
analytic in the strip
\begin{equation}\label{g6.9}
- 1 <\mathrm{Re}\lambda<\frac 1 2.
\end{equation}

The Corollary to Proposition 5.1 in \cite{SF1}, p 222, states
the convergence of the infinite product $K(\lambda)$ except from the poles of
$\Gamma(n-\frac 1 2 +\lambda)$, $\Gamma(n+1-\lambda)$, $n\in \mathbb{N}$.
Proposition 5.2 in \cite{SF1} gives the following asymptotic formula
\begin{equation}\label{g6.10}
K(\lambda)=|\omega(\lambda)|^{\lambda_1-1/2}\exp(-\lambda_2\arg
\omega(\lambda)+r_1(\lambda)), \lambda=\lambda_1+i\lambda_2,
\end{equation}
here $r_1(\lambda)=O(1)$, in the strip
\[
-\frac 1 2 < \mathrm{Re }<2.
\]
By definition \eqref{g6.0}, we have (cf. (6.7) in \cite{SF1})
\begin{align}\label{g6.11}
\omega(\lambda)&=\lambda_1+i\lambda_2)\left\{\frac {\sin 2\lambda_1\pi}{\cosh 2\lambda_2\pi -\cos 2\lambda_1\pi}-i \frac {\sinh 2\lambda_2\pi}{\cosh 2\lambda_2\pi -\cos 2\lambda_1\pi}\right\},\nonumber\\
&\mathrm{Re \, } \omega(\lambda)=\frac {\lambda_1\sin 2\lambda_1\pi+\lambda_2\sinh 2\lambda_2\pi}{\cosh 2\lambda_2\pi -\cos 2\lambda_1\pi},\\
&\mathrm{Im \, } \omega(\lambda)=\frac {\lambda_2\sin 2\lambda_1\pi-\lambda_1\sinh 2\lambda_2\pi}{\cosh 2\lambda_2\pi -\cos 2\lambda_1\pi}\nonumber
\end{align}
Therefore,  we obtain by consecutive steps
\begin{equation}\label{g6.14}
\mathrm{Re }\, \omega( \lambda )>0, |\mathrm{arg \,} \omega(\lambda)|<\frac \pi 2
\end{equation}
\begin{equation}\label{g6.15}\arg \omega(\lambda)=\arctan \frac {\lambda_2\sin
2\pi\lambda_1-\lambda_1\sinh 2\pi\lambda_2}{\lambda_1\sin
2\pi\lambda_1+\lambda_2\sinh 2\pi\lambda_2},
\end{equation}
\begin{equation}\label{g6.17}
|\arg \omega(\lambda)|\leq \frac C
{|\lambda_2|}.
\end{equation}
for sufficiently large $|\lambda_2|$
(see \S 6, p.228, \S7, p.231-232 in \cite{SF1}).

\begin{remark}\label{r3.1}
 We emphasize that, as opposed  to \cite{SF1}, there is a simple pole   in $\lambda=-1$ in the right-hand side of \eqref{g6.1}.
\end{remark}

As in \cite{SF1}, the solution of \eqref{g6.1} is sought in the form
\begin{equation}\label{g6.18}
d_1(\lambda)=d_0(\lambda)y(\lambda).
\end{equation}
Substitution of \eqref{g6.18} in \eqref{g6.1} gives
\[\kappa_0d_0(\lambda+1)y(\lambda+1)+\omega(\lambda)d_0(\lambda)y(\lambda)=-\frac{\widetilde{f}(\lambda+2)}{\lambda+1},\]
\[\kappa_0d_0(\lambda+1)y(\lambda+1)-\kappa_0d_0(\lambda+1)y(\lambda)=-\frac{\widetilde{f}(\lambda+2)}{\lambda+1},\]
and, to this end,
\begin{equation}\label{g7.1}
y(\lambda+1)-y(\lambda)=h(\lambda),\quad h(\lambda)=-
\frac{\widetilde{f}(\lambda+2)}{\kappa_0(\lambda+1)d_0(\lambda+1)}.
\end{equation}
Let $\mathcal{L}_{l,\varepsilon}$, $l=-1,0$ be the contour in the complex
plane consisting from the three parts
\[ \mathcal{L}_{l,\varepsilon}=\{ {\mathrm Re}\,
\lambda=l, \varepsilon<{\mathrm Im}\, \lambda<\infty\}\cup\{
{\mathrm Re}\,\lambda>l, |\lambda-l|=\varepsilon\}\cup\{ {\mathrm
Re}\, \lambda=l, -\infty<{\mathrm Im}\, \lambda<-\varepsilon\}.\]

\begin{remark}\label{r3.2} The function $h(\lambda)$ is analytic the strip \eqref{g6.9} instead of $-2<\mathrm{Re }\lambda<\frac 1 2$ in \cite{SF1}.
\end{remark}
Taking into account this observation, we can apply the Corollary to Proposition 6.1 in \cite{SF1} to conclude that the function
\begin{equation}\label{g7.2}
y(\lambda)=\frac 1
{2i}\int\limits_{\mathcal{L}_{-1,\varepsilon}}h(\lambda+\zeta)[\cot
\zeta\pi+i]d\zeta,
\end{equation}
is a solution of equation \eqref{g7.1} for all $\lambda$ such that
\begin{equation}\label{g7.21}
 {\mathrm
Re}\,\lambda\in (0,1/2),
\end{equation}
instead of $ {\mathrm
Re}\,\lambda\in (-1/2,1/2)$ in \cite{SF1} (see Remark 3.2). Here we use the same argument as in Statement 6.1 in \cite{SF1} as follows. We consider difference $y(\lambda +1)-y(\lambda)$, change the variable in the integral for $y(\lambda +1)$ and verify that $h(\lambda+\zeta)$ is analytic function with respect to $\zeta$ in a domain bounded by the contours $\mathcal{L}_{-1,\varepsilon}$ and $\mathcal{L}_{0,\varepsilon}$. Finally, we use the Cauchy residue theorem.

By \eqref{g6.18}, \eqref{g7.2}, we find a solution of \eqref{g6.1}
\begin{align}\label{g7.3}
d_1(\lambda)=d_0(\lambda)y(\lambda)=-\frac 1
{2i}\int\limits_{\mathcal{L}_{-1,\varepsilon}}\frac {d_0(\lambda)}{\kappa_0(\lambda+\zeta+1)d_0(\lambda+\zeta+1)}\widetilde{f}(\lambda+\zeta+2)[\cot
\zeta\pi+i]d\zeta.
\end{align}
One can see that $y(\lambda)$ and therefore $d_1(\lambda)$ are analytic in the strip
\begin{equation}\label{g7.4}
0<\mathrm{Re }
\lambda <\frac 3 2,
\end{equation}
(cf. \eqref{g6.7}).

 Estimate (7.3) in \cite{SF1} gives (see also \eqref{g4.5})
\begin{align}\label{g8.1}
    \int\limits_{\mathrm{Re} \lambda=\mu-k-1}&|d_1(\lambda)|^2(1+|\lambda|)^{2(k+1)+1}\frac{d\lambda}{i}\nonumber \\ &\leq C
   \int\limits_{\mathrm{Re} \lambda=\mu -k-1}\left|\frac{f(\lambda+1)}{\lambda}\right|^2(1+|\lambda|)^{2(k+1)+1}\frac{d\lambda}{i}\nonumber
   \\&
   \leq C \int\limits_{\mathrm{Re} \lambda=\mu-k-1}\left|f(\lambda+1)\right|^2(1+|\lambda|)^{2k+1}\frac{d\lambda}{i}\nonumber \\ &\leq C \int\limits_{\mathrm{Re} \lambda=\mu-k}\left|f(\lambda)\right|^2(1+|\lambda|)^{2k+1}\frac{d\lambda}{i}\leq C \Vert f \Vert_{H^{k+1/2}_\mu(\mathbb{R}_+)}.
   \end{align}
Let us remind that the right-hand term in \eqref{g6.11} is $\frac{\widetilde{f}(\lambda+2)}{(\lambda+1)}$ instead of $\widetilde{g}(\lambda+1)$ in corresponding equation (4.7) in \cite{SF1}.

General solution of \eqref{g7.1} have the form
\[d_{2}(\lambda)=d_1(\lambda)+D_0(\lambda),\]
where $D_0(\lambda)$ is a general solution of homogeneous equation \eqref{g6.2}.
We check at once that
\[
\frac{D_0(\lambda+1)}{d_0(\lambda+1)}=\frac{\omega(\lambda)D_0(\lambda)}{\omega(\lambda)d_0(\lambda)}=\frac{D_0(\lambda)}{d_0(\lambda)},
\]
i.e. $E(\lambda)=\frac{D_0(\lambda)}{d_0(\lambda)}$ is a periodic function
with period one. One can show that an entire periodic function with period 1 is either constant or grows no slower as
$\exp(2\pi |\mathrm{Im }(\lambda)|)$ as $\lambda_2\to \pm \infty$
(see Theorem 4.10 in \cite{M3}). Thus the integral
\[    \int\limits_{\mathrm{Re} \lambda=l_0+1}|D_0(\lambda)|^2(1+|\lambda|)^{2(k+1)+1}\frac{d\lambda}{i}\]
is bounded only if $E(\lambda)\equiv 0$ (cf. \eqref{g8.1}). In the same way as in \cite{SF1}, p.228-229, this proves that
$d_1$ is a unique solution of \eqref{g6.1} (see also  Lemma 3.1.7 in \cite{JBJ}).

Next, we return to the functions \eqref{g5.11}. We set
\begin{equation}\label{g8.2}
\widetilde{p}(s,\theta)=\frac {\cos s\theta}{\sin
s\pi}d_1(s), \quad \widetilde{q}(s)=\cot s\pi d_1(s).
\end{equation}
Clearly, these functions are analytic in the strip $0<\mathrm{Re}s<\frac{1}{2}$.

Let us turn to the estimate \eqref{g2.3}. First, we will estimate norms of
function $q$. Let us remark that
\begin{equation}\label{gv.0}
|\cot \pi \lambda|\leq C, \mbox{if } \mathrm{Re }\lambda=\mu-k-1.
\end{equation}
By \eqref{g4.5}, \eqref{g8.1}, we have
\begin{equation}\label{gv.1}
    \Vert q \Vert_{H^{k+\frac 3 2}_\mu(\mathbb{R}_+)}\leq C\int\limits^{\mu-k-1+i\infty}_{\mu-k-1-i\infty}|\widetilde{q}(\lambda)|^2(1+|\lambda|)^{2(k+1)+1}\frac{d\lambda}{i}\leq C \Vert f \Vert_{H^{k+\frac 1 2}_\mu(\mathbb{R}_+)}.
\end{equation}
From equation \eqref{g5.4} one can expect at least that $q_{xx}\in H^{k+\frac 1 2}_\mu(\mathbb{R}_+)$. By \eqref{ga.2} (see Appendix A), we have
\[
\frac{1}{r}(p_\theta(r,-\pi)-p_\theta(r,\pi))=\frac{1}{\pi i}\int\limits_{\mathrm{Re} \lambda=\mu-k-1}r^{-\lambda-1}\lambda d_1(\lambda)d\lambda=\frac{1}{\pi i}\int\limits_{\mathrm{Re} \lambda=\mu-k}r^{-\lambda}(\lambda-1) d_1(\lambda-1)d\lambda.
\]
Therefore, the Mellin transform of $\frac{1}{r}(p_\theta(r,-\pi)-p_\theta(r,\pi))$  is $2(\lambda-1)\lambda d_1(\lambda-1).$ By \eqref{g8.1},  \eqref{g4.5}, we obtain
\begin{align*}
\int\limits^{\mu-k+i\infty}_{\mu-k-i\infty}&|(\lambda-1) d_1(\lambda-1)|^2(1+|\lambda|)^{2k+1}\frac{d\lambda}{i}\\ 
&\leq C \int\limits^{\mu-k-1+i\infty}_{\mu-k-1-i\infty}|d_1(\lambda)|^2(1+|\lambda|)^{2(k+1)+1}\frac{d\lambda}{i}\leq C \Vert f \Vert_{H^{k+\frac 1 2}_\mu(\mathbb{R}_+)},
\end{align*}
and hence
\begin{equation}\label{gv.2}
\Vert \frac{1}{r}(p_\theta(r,-\pi)-p_\theta(r,\pi)) \Vert_{H^{k+\frac 1 2}_\mu(\mathbb{R}_+)}\leq  C \Vert f \Vert_{H^{k+\frac 1 2}_\mu(\mathbb{R}_+)}.
\end{equation}
By \eqref{g5.4}, \eqref{gv.2}, we deduce that $q_{rr}\in H^{k+\frac 1 2}_\mu(\mathbb{R}_+)$ and
\begin{equation}
    \label{gv.3}
    \Vert q_{rr} \Vert_{H^{k+\frac 1 2}_\mu(\mathbb{R}_+)}\leq C\Vert f \Vert_{H^{k+\frac 1 2}_\mu(\mathbb{R}_+)}.
\end{equation}
We will prove that $q_{r}\in H^{k+\frac 3 2}_\mu(\mathbb{R}_+)$, as it asserted in \eqref{g2.3}. In fact,  definition \eqref{g3.4} and estimate \eqref{g4.5} give
\begin{align}\label{gv.4}
\Vert
&q_r\Vert^2_{H^{k+3/2}_\mu(\mathbb{R_+})}=\sum\limits^{k+1}_{l=0}\int\limits^\infty_0
r^{2(\mu-k-1+l)-1}\left|\frac{d^{l+1}q(r)}{dr^{l+1}}\right|^2dr+ \Vert
\frac{d^{k+2}q}{dr^{k+2}}\Vert_{L^{1/2}_\mu(\mathbb{R_+})} \nonumber \\
&\qquad =\int\limits^\infty_0r^{2(\mu-k-1)-1}|q_r(r)|^2dr+\sum\limits^{k}_{j=0}\int\limits^\infty_0
r^{2(\mu-k-1+j+1)-1}\left|\frac{d^{j+2}q(r)}{dr^{j+2}}\right|^2dr+ \Vert
\frac{d^{k+2}q}{dr^{k+2}}\Vert_{L^{1/2}_\mu(\mathbb{R_+})}\nonumber \\
&\qquad =\int\limits^\infty_0r^{2(\mu-k-1)-1}|q_r(r)|^2dr+\Vert
q_{rr}\Vert^2_{H^{k+1/2}_\mu(\mathbb{R_+})}.
\end{align}
The Hardy inequality (see (0.32) in \cite{KS2}) gives
\begin{equation}\label{gv.5}
    \int\limits^\infty_0r^{2(\mu-k)-1-2}|q_r(r)|^2dr\leq C
    \int\limits^\infty_0r^{2(\mu-k)-1}|q_{rr}(r)|^2dr\leq C\Vert q_{rr}\Vert^2_{H^{k+1/2}_\mu(\mathbb{R}_+)},
\end{equation}
for $2(\mu-k)-1>1$, i.e. $\mu-k-1>0$ (see \eqref{g2.1}).
\eqref{gv.4}, \eqref{gv.5} lead to
\begin{equation}\label{gv.6}
\Vert q_r\Vert^2_{H^{k+3/2}_\mu(\mathbb{R_+})}\leq C\Vert
f\Vert^2_{H^{k+1/2}_\mu(\mathbb{R_+})}.
\end{equation}

Second, we will consider  the norm \eqref{g3.3}. Remark that $2(\mu-k-2)+1=2(\mu-k-1)-1$.  The Parceval formula \eqref{g4.1} and \eqref{g4.4} give
\begin{align}\label{g9.1}
& \Vert p \Vert^2_{k+2,\mu}=\sum\limits^{k+2}_{l=0}
\int\limits^\infty_0r^{2(\mu-k-2)+1}\int\limits^\pi_{-\pi}\left|
\left(r\frac{\partial}{\partial
r}\right)^{(l)}\partial^{(k+2-l)}_\theta p(r,\theta)\right|^2d\theta dr \nonumber
\\
& =
\sum\limits^{k+2}_{l=0}
\int\limits_{\mathrm{Re }\, s=\mu-k-1}\frac{ds}{i}\int\limits^\pi_{-\pi}\left|
s^l\, \partial^{(k+2-l)}_\theta \widetilde{p}(s,\theta)\right|^2d\theta.
\end{align}
 Formula \eqref{g8.2} and estimate \eqref{g8.1} give
\begin{align*}
& \Vert p \Vert^2_{k+2,\mu}\leq C\int\limits_{\mathrm{Re }\, s=\mu-k-1}|s|^{2l}|d_1(s)|^2 \frac{ds}{i} \int\limits^\pi_{-\pi}|s|^{2(k+2-l)}\frac{|\cos s\theta|^2+|\sin \theta s|^2}{|\sin s\pi|^2}d\theta\\
&\leq C\int\limits_{\mathrm{Re }\, s=\mu-k-1}|s|^{2(k+1)}|d_1(s)|^2 \frac{ds}{i}\int\limits^{\pi}_{-\pi}|s|^2\frac{|\cos s \theta|^2+|\sin s
\theta|^2}{|\sin s \pi|^2}d\theta.
\end{align*}

By estimate (see p.230 in \cite{SF1})
\[\int\limits^{\pi}_{-\pi}|s|^2\frac{|\cos s \theta|^2+|\sin s
\theta|^2}{|\sin s \pi|^2}d\theta\leq C(1+|s|),\]
and \eqref{g8.1}, \eqref{g4.5}, we get
\begin{align}\label{g9.4}
& \Vert p \Vert^2_{k+2,\mu}\leq C\int\limits_{\mathrm{Re }\, s=\mu-k-1}(1+|s|)^{2(k+1)+1}|d_1(s)|^2 \frac{ds}{i}\leq C\Vert f\Vert^2_{H^{k+\frac 1 2}_\mu(\mathbb{R}_+)}.
\end{align}

Summing up \eqref{gv.1}, \eqref{gv.6}, \eqref{g9.4}, we deduce estimate \eqref{g2.3}.

\section{Boundary conditions}

Boundary conditions \eqref{g1.2}-\eqref{g1.4} are considered in Appendix A. This section is devoted to \eqref{g1.5}.

In brief outline, here are the main steps of the proof. First, we
split $q$ into two terms $q_1$ and $q_2$ (see \eqref{g11.2}, \eqref{g12.4} below). Second,
we prove that $q_{1,r}(0)=0$ under assumption \eqref{g2.2}. Third,
the term $q_{2,r}$ (see \eqref{g14.1} below) is estimated by the Cauchy
inequality (see \eqref{g14.5} below). Here we use that $f\in
H^{k+1/2}_\mu(\mathbb{R}_+)$, and hence
\[
\int\limits^\infty_0|f(\rho)|^2\rho^{2(\mu-k)-1}d\rho\leq \Vert
f\Vert^2_{H^{k+1/2}_\mu(\mathbb{R}_+)}.\] To this end, we use the
appropriate changes of the contour of integration with respect to
$s$ in order to estimate integrals with the kernel $\Phi(s,\zeta)$. The kernel $\Phi(s,\zeta)$ itself is estimated in Appendix B.

We proceed by converting \eqref{g7.2} to a more appropriate form. The Cauchy residue theorem gives
\begin{align}\label{g11.1}
y(\lambda)=-\frac 1
{2i}\int\limits_{\mathcal{L}_{0,\varepsilon}}\frac{\widetilde{f}(\kappa_0\lambda+\zeta+2)}{(\lambda+\zeta+1)d_0(\lambda+\zeta+1)}[\cot
\zeta\pi+i]d\zeta+\pi \mathrm{Res}_{\zeta=0}\frac{\widetilde{f}(\kappa_0\lambda+\zeta+2)}{(\lambda+\zeta+1)d_0(\lambda+\zeta+1)}\cot
\zeta\pi,\nonumber \\
=-\frac 1
{2i}\int\limits_{\mathcal{L}_{0,\varepsilon}}\frac{\widetilde{f}(\lambda+\zeta+2)}{\kappa_0(\lambda+\zeta+1)d_0(\lambda+\zeta+1)}[\cot
\zeta\pi+i]d\zeta+\frac{\widetilde{f}(\lambda+2)}{\kappa_0(\lambda+1)d_0(\lambda+1)},
\end{align}
here $\mathrm{Re }\lambda\in (0,\frac{1}{2})$. By \eqref{g5.11}, \eqref{g6.2}, \eqref{g7.2}, \eqref{g7.3}, we have
\begin{align}\label{g11.2}
&\widetilde{q}(\lambda)=\cot \pi \lambda d_1(\lambda)=\cot \pi \lambda d_0(\lambda)y(\lambda)=-\frac{\kappa_0d_0(\lambda+1)}{\lambda}y(\lambda)\nonumber \\ &=-\frac{\widetilde{f}(\lambda+2)}{\lambda(\lambda+1)}
+\frac 1
{2i}\int\limits_{\mathcal{L}_{0,\varepsilon}}\frac{d_0(\lambda+1)}{\lambda(\lambda+\zeta+1)d_0(\lambda+\zeta+1)}\widetilde{f}(\lambda+\zeta+2)[\cot
\zeta\pi+i]d\zeta= \widetilde{q}_1(\lambda)+\widetilde{q}_2(\lambda).
\end{align}

Formula \eqref{g6.3} yields
\begin{equation}\label{g11.3}
    \frac{1}{2i}\frac{d_0(\lambda+1)}{(\lambda+\zeta+1)d_0(\lambda+\zeta+1)}[\cot
\zeta\pi+i]=(\kappa_0\pi)^\zeta\frac{K(\lambda+1)}{(\lambda+\zeta+1)K(\lambda+\zeta+1)}
    \frac{1}{e^{i\pi\zeta}-e^{-i\pi\zeta}}.
\end{equation}
Set
\begin{equation}\label{g12.1}
\Phi(\lambda,\zeta)=(\kappa_0\pi)^\zeta\frac{K(\lambda)}{(\lambda+\zeta)K(\lambda+\zeta)}\frac{1}{e^{i\pi\zeta}-e^{-i\pi\zeta}}.
\end{equation}
Then we get
\begin{equation}\label{g12.3}
    \widetilde{q}_2(\lambda)=\int\limits_{\mathcal{L}_{0,\varepsilon}}\frac{1}{\lambda}\Phi(\lambda+1,\zeta)\widetilde{f}(\lambda+2+\zeta)d\zeta.
\end{equation}
Set $l_0=\mu-k-1$, and then $l_0\in (0,1/2)$. By \eqref{g3.7}
\begin{align}\label{g12.4}
    q(r)=&\frac{1}{2\pi i}\int\limits_{\mathrm{Re }\lambda=l_0}r^{-\lambda}\widetilde{q}(\lambda)d\lambda=\frac{1}{2\pi i}\int\limits_{\mathrm{Re }\lambda=l_0}r^{-\lambda}\widetilde{q}_1(\lambda)d\lambda+\frac{1}{2\pi i}\int\limits_{\mathrm{Re }\lambda=l_0}r^{-\lambda}\widetilde{q}_2(\lambda)d\lambda\nonumber \\
    =&q_1(r)+q_2(r).
\end{align}
Consider $q_{1,r}(r)$ and $q_{2,r}(r)$ separately. For the first term we have
\begin{equation}\label{g12.5}
    q_{1,r}(r)=\frac{1}{2\pi i}\int\limits_{\mathrm{Re}\lambda=l_0}r^{-\lambda-1}\frac{\widetilde{f}(\lambda+2)}{\lambda+1}d\lambda=\frac{1}{2\pi i}\int\limits_{\mathrm{Re}s=l_0+1}r^{-s}\frac{\widetilde{f}(s+1)}{s}ds.
\end{equation}

It is clear that if $u(r)=rf(r)$ then $\widetilde{u}(s)=\widetilde{f}(s+1)$. By formula (7.1.2) in  \cite{BE}, if $\widetilde{v}(s)=s^{-1}$, $\mathrm{Re} s >0$, then
\[
v(r)=\begin{cases}
            1, & 0<x<1,\\
            0, & 1<x.
         \end{cases}
\]
The Mellin convolution formula \eqref{g4.2} gives
\begin{equation}\label{g12.6}
    q_{1,r}(r)=\int\limits^1_{0}\frac{r}{t}f\left(\frac{r}{t}\right)\frac{dt}{t}=\int\limits^\infty_rf(\rho)d\rho.
\end{equation}
By virtue of \eqref{g2.2}, we conclude
\begin{equation}\label{g12.7}
    q_{1,r}(0)=0.
\end{equation}
Next, we consider $q_{2,r}(r)$. Substitution of \eqref{g12.3} to \eqref{g12.4}
gives
\begin{equation}\label{g13.1}
    q_{2,r}(r)=-\frac{1}{2\pi i}\int\limits_{\mathrm{Re} \lambda=l_0}r^{-\lambda-1}d\lambda\int\limits_{\mathcal{L}_{0,\varepsilon}}\Phi(\lambda+1,\zeta)\widetilde{f}(\lambda+2+\zeta)d\zeta.
\end{equation}
The point $\lambda=0$ is not singular point of the last expression (compare with \eqref{g12.3}). Therefore we are able to deduce
\begin{equation}\label{g13.2}
    q_{2,r}(r)=-\frac{1}{2\pi i}\int\limits_{\mathrm{Re} s=l_0+1}r^{-s}ds\int\limits_{\mathcal{L}_{0,\varepsilon}}\Phi(s,\zeta)\widetilde{f}(s
    +1+\zeta)d\zeta=\frac{1}{2\pi i}\int\limits_{\mathrm{Re} s=l_0}r^{-s}ds\int\limits_{\mathcal{L}_{0,\varepsilon}}\Phi(s,\zeta)\widetilde{f}(s
    +1+\zeta)d\zeta.
\end{equation}
Remark that $\Phi(s,\zeta)$ in \eqref{g13.2} is analytic with respect to $s$ if $\mathrm{Re }s\in (l_0,l_0+1)$, $\zeta\in\mathcal{L}_{0,\varepsilon}$ and  $\varepsilon$ is sufficiently small ($\varepsilon<\frac{1}{2}-l_0$). Let us notice that demanding of smallness of $\varepsilon$ was implicitly presented in \cite{SF1}, p.228. Below we will change the contours of integration with respect to $s$, $\zeta$, keeping $\Phi(s,\zeta)$ away from singular points. Throughout we need to be sure that
\begin{equation}\label{g13.3}
0<\mathrm{Re}\, \zeta<1,\quad    0<\mathrm{Re} (s+\zeta)<\frac{3}{2}, \quad -\frac{1}{2}<\mathrm{Re}\, s<\frac{3}{2},
\end{equation}
(cf. \eqref{g6.7}-\eqref{g6.9}).
Set $s=\sigma+i\tau$, $\zeta=\vartheta+i\eta$.

Fix any $\alpha\in (1,2)$ and put
\begin{equation}\label{g13.4}
    \vartheta=l_0+\frac{\alpha-1}{2}.
\end{equation}
This choice will be explained below. Then
\begin{equation}\label{g13.5}
  0<\vartheta<1, 0<\vartheta+l_0=2l_0+\frac{\alpha-1}{2}<\frac{3}{2},
\end{equation}
i.e. \eqref{g13.3} holds for $l_0=\mathrm{Re}\, s$, $\theta=\mathrm{Re}\, \zeta$. Hence we get
\begin{equation}\label{g14.1}
    q_{2,r}(r)=\frac{1}{2\pi i}\int\limits_{\mathrm{Re} s=l_0}r^{-s}ds\int\limits_{\mathrm{Re } \zeta =\vartheta}\Phi(s,\zeta)\widetilde{f}(s
    +1+\zeta)d\zeta
\end{equation}
By the Mellin convolution formula \eqref{g4.2}, we deduce
\begin{align}\label{16.4}
q_{2,r}(r)&=\frac{1}{2 \pi i}\int\limits^\infty_0\frac{dt}{t}\int\limits_{\mathrm{Re }\zeta =\vartheta}d\zeta \int\limits_{\mathrm{Re }s=l_0}f\left(\frac
r t\right)\left(\frac r
t\right)^{1+\zeta}t^{-s}\Phi(s,\zeta)ds\nonumber \\
&=\frac{1}{2 \pi i}\int\limits^\infty_0f\left(\frac
r t\right)\left(\frac r
t\right)^{1+\theta}\frac{dt}{t}\int\limits_{\mathrm{Re }\zeta =\vartheta}d\zeta \int\limits_{\mathrm{Re }s=l_0}\left(\frac r
t\right)^{i\eta}t^{-s}\Phi(s,\zeta)ds\nonumber \\
&=\int\limits^\infty_0f\left(\frac
r t\right)\left(\frac r
t\right)^{1+\vartheta}\mathcal{G}(t,\vartheta)\frac{dt}{t},
\end{align}
here
\begin{equation}\label{g14.4}
    \mathcal{G}(t,\vartheta)=\frac{1}{2\pi i}\int\limits^{+\infty}_{-\infty}\biggl(\frac{r}{t}\biggr)^{i\eta}d\eta\int\limits_{\mathrm{Re }s=l_0}t^{-s}\Phi(s,\vartheta+i\eta)ds
\end{equation}
The Cauchy inequality yields
\begin{equation}\label{g14.5}
    |q'_2(r)|\leq C \left[\int\limits^\infty_0|f\left(\frac
r t\right)|^2\left(\frac r
t\right)^{2(1+\vartheta)}t^{\alpha-2}dt\right]^{\frac{1}{2}}\left[\int\limits^\infty_0|\mathcal{G}(t,\vartheta)|^2t^{-\alpha}dt\right]^{\frac{1}{2}}\leq C I_1^{\frac{1}{2}}I_2^{\frac{1}{2}}.
\end{equation}
Following the choice of $\vartheta$ in \eqref{g13.4}, we obtain
\begin{align}\label{g14.6}
   I_1=\int\limits^\infty_0|f\left(\frac
r t\right)|^2\left(\frac r
t\right)^{2(1+l_0)
+\alpha-1}t^{\alpha-2}dt
=
\int\limits^\infty_0|f\left(
\rho\right)|^2\rho^{2l_0+1+\alpha}\left(\frac{r}{\rho}\right)^{\alpha-2}\frac{rd\rho}{\rho^2}\nonumber \\=r^{\alpha-1}\int\limits^\infty_0|f\left(
\rho\right)|^2\rho^{2(\mu-k-1)+1}d\rho=
r^{\alpha-1}\int\limits^\infty_0|f\left(
\rho\right)|^2\rho^{2(\mu-k)-1}d\rho\leq C r^{\alpha-1}\Vert f\Vert^2_{H^{k+\frac{1}{2}}_\mu(\mathbb{R
}_+)}.
\end{align}
The second integral $I_2$ can be splitted into a sum of two integrals
\[I_2=\int^\infty_0\dots dt=\int^1_0\dots dt+\int^\infty_1\dots dt\]
Let $\sigma_1$,  $\sigma_2$ are chosen to satisfy (compare with \eqref{g13.3})
\begin{align}\label{g15.1}
    \sigma_1>0, 0<-&\sigma_1+\vartheta<\frac{3}{2}, -\frac{1}{2}<-\sigma_1<\frac{3}{2},\nonumber\\
    0<&\sigma_2+\vartheta<\frac{3}{2},  0<\sigma_2<\frac{3}{2}.
\end{align}
Then we derive
\begin{align}\label{g15.2}
    |I_2|\leq C\biggl(\int^1_0t^{2\sigma_1-\alpha}\left|\int\limits^{+\infty}_{-\infty}d\eta\int\limits_{\mathrm{Re }s=-\sigma_1}|\Phi(s,\vartheta+i\eta)|ds\right|^2dt\nonumber \\
    +\int^\infty_1t^{-2\sigma_2-\alpha}\left|\int\limits^{+\infty}_{-\infty}d\eta\int\limits_{\mathrm{Re }s=\sigma_2}|\Phi(s,\vartheta+i\eta)|ds\right|^2dt\biggr).
\end{align}
We claim that
\begin{equation}\label{g15.3}
|\Phi(s,\zeta)|\leq C(M,\sigma,\vartheta)\exp\left(-\frac \pi
4|\eta|\right)\left[\frac 1
{(1+|\tau|)^{1+\vartheta}}+\exp\left(-\frac \pi 4
|\tau|\right)\right],
\end{equation}
under assumptions (cf. \eqref{g13.3})
\begin{equation}\label{g15.4}
    \vartheta\in (0,1), 0<\mathrm{Re \,}s+\vartheta <\frac{3}{2}, -\frac{1}{2}<\mathrm{Re }s<\frac{3}{2}.
    \end{equation}
The proof \eqref{g15.3} is given in Appendix B. Estimate \eqref{g15.3} yields
\begin{equation}\label{g15.5}
    \int\limits^{+\infty}_{-\infty}d\eta\int\limits_{\mathrm{Re }s=\sigma_0}|\Phi(s,\vartheta+i\eta)|ds\leq C(\sigma_0,\vartheta),
\end{equation}
for any $\sigma_0=\mathrm{Re} s$ and $\vartheta$ satisfying \eqref{g15.4}.

In addition to \eqref{g15.1} we need
\begin{equation}\label{g15.6}
    2\sigma_1-\alpha>-1, -2\sigma_2-\alpha<-1.
\end{equation}
It can be easily checked that \eqref{g15.1}, \eqref{g15.6},
\eqref{g15.5} guarantee the convergence of integrals in \eqref{g15.2}.

Now we proceed to the choice of $\sigma_1$, $\sigma_2$. Let us remind that $\theta$ are already chosen.  It is easily seen that any
\begin{equation}\label{g16.1}
    \sigma_2\in (0,\frac{3}{2}-\vartheta)
\end{equation}
satisfies both \eqref{g15.1} and \eqref{g15.6}. Parameter $\sigma_1$ satisfies \eqref{g15.1},\eqref{g15.6}, if
\begin{equation}\label{g16.2}
    \frac{\alpha-1}{2}<\sigma_1, \sigma_1<\vartheta, \sigma_1<\frac{1}{2}.
\end{equation}
We check at once that
\begin{equation*}
  \frac{\alpha-1}{2}<\frac{1}{2}, \quad \frac{\alpha-1}{2}<\vartheta=l_0+\frac{\alpha-1}{2},
\end{equation*}
i.e. interval $\left(\frac{\alpha-1}{2},\min\{\frac{1}{2},\vartheta\}\right)$ is not empty. Hence we can  take any
\begin{equation}\label{g16.4}
    \sigma_1\in \left(\frac{\alpha-1}{2},\min\left\{\frac{1}{2},\vartheta\right\}\right).
\end{equation}
This allows us  finally to conclude
\begin{equation}\label{g16.5}
    I_2\leq C(\sigma_1,\sigma_2,\theta).
\end{equation}
Substitution \eqref{g14.6}, \eqref{g16.5} in \eqref{g14.5} gives
\begin{equation}\label{g16.6}
    |q_{2,r}(r)|\leq C r^{\frac{\alpha-1} {2}}\Vert f\Vert^2_{H^{k+\frac{1}{2}}_\mu(\mathbb{R
}_+)}.
\end{equation}
Combining \eqref{g12.7} and \eqref{g16.6} we can assert that
$q'(0)=0$, that justifies \eqref{g1.5}. The proof of Theorem 1 is finished.

\begin{remark}\label{r17.1}
Let $\widehat{f}$ be a polynomial of degree $m$
\[\widehat{f}(x_1)=\sum\limits^m_{k=0} f_kx_1^k.\]
One can easily checks that
\begin{align*}
\widehat{q}(x_1)=&q_0+q_1x_1-\sum\limits^m_{k=0}
\frac{f_k}{(k+1)(k+2)}x_1^{k+2},\\
\widehat{p}(x)=&q_0+q_1x_1-\sum\limits^m_{k=0}
\frac{f_k}{(k+1)(k+2)}\mathrm{Re \, }\left[\left(x_1+ix_2\right)^{k+2}\right],
\end{align*}
satisfy \eqref{g1.1}-\eqref{g1.4}. In this context, it may be the
subject of a future paper to consider problem
\eqref{g1.1}-\eqref{g1.6} without artificial condition
$\int\limits^0_{-\infty}\widehat{f}(x_1)dx_1= 0,$ in
weighted Sobolev spaces with nonhomogeneous norms (cf. \cite{CDN}, \cite{SF2}, \cite{NZ}).
\end{remark}

\section{Appendix A}\label{s5}

We have immediately from \eqref{g8.2}
\begin{equation}\label{fa.1}
    p(r,\theta)=\frac{1}{2\pi i}\int\limits_{\mathrm{Re }\lambda=l_0}r^{-\lambda}\frac{\cos \lambda\theta}{\sin \lambda\pi}d_1(\lambda)d\lambda,
    \quad q(r)=\frac{1}{2\pi i}\int\limits_{\mathrm{Re }\lambda=l_0}r^{-\lambda}\cot s\pi d_1(\lambda)ds,
\end{equation}
here $l_0=\mu-k-1\in (0,1/2)$. Then it follows
\begin{equation}\label{ga.2}
    p(r,\pi)+p(r,-\pi)-2q(r)=
    \frac{1}{2\pi i}\int\limits_{\mathrm{Re }\lambda=l_0}r^{-\lambda}[\cot \lambda\pi+\cot \lambda\pi-2\cot \lambda\pi] d_1(\lambda)d\lambda=0.
\end{equation}
This gives \eqref{g1.2}.

Direct computations shows that
\begin{align}\label{bg1}
    p_\theta(r,\theta)&=-\frac{1}{2\pi i}\int\limits_{\mathrm{Re }\lambda=l_0}r^{-\lambda}\frac{\sin \lambda\theta}{\sin \lambda\pi}\lambda d_1(\lambda)d\lambda,\nonumber
    \\
    p_\theta(r,\pm\pi)&=\mp\frac{1}{2\pi i}\int\limits_{\mathrm{Re }\lambda=l_0}r^{-\lambda}\lambda d_1(\lambda)d\lambda,
    \end{align}
and
\begin{equation*}
    p(r,\pi)-p(r,-\pi)=0.
\end{equation*}
This gives \eqref{g5.3}, and hence \eqref{g1.3}. Let us notice that, as easily seen, $p_\theta(r,\pi)-p_\theta(r,-\pi)\neq 0$. Thus, equation \eqref{g5.4} is not reduced to $-q_{rr}(r)=f(r)$.

Next, we proceed to condition \eqref{g5.4}. We have
\begin{align*}
    B[p,q](r)\overset{def}{=} &-q_{rr}(r)+\frac{\kappa_2}{r}\left(p_\theta(r,\pi)-p_\theta(r,-\pi)\right)\\ =&
    -\frac{1}{2\pi i}\int\limits_{\mathrm{Re }\lambda=l_0}r^{-\lambda-2}\cot \lambda\pi \lambda(\lambda+1)d_1(\lambda)d\lambda-\frac{2\kappa_2}{2\pi i}\int\limits_{\mathrm{Re }\lambda=l_0}r^{-\lambda-1}\lambda d_1(\lambda)d\lambda.
\end{align*}
Since $d_1(\lambda)$ solves \eqref{g6.1}, we can continue
\begin{align*}
    B[p,q](r)&=\frac{1}{2\pi i}\int\limits_{\mathrm{Re }\lambda=l_0}r^{-\lambda-2}\widetilde{f}(\lambda+2)d\lambda\\
		&+\frac{\kappa_0}{2\pi i}\int\limits_{\mathrm{Re }\lambda=l_0}\left[r^{-\lambda-2}(\lambda+1)d_1(\lambda+1)d\lambda-r^{-\lambda-1}\lambda d_1(\lambda)\right]d\lambda\\
    &=\frac{1}{2\pi i}\int\limits_{\mathrm{Re }\lambda=l_0+2}r^{-\lambda}\widetilde{f}(\lambda)d\lambda\\
		&+\frac{\kappa_0}{2\pi i}\left[\int\limits_{\mathrm{Re }\lambda=l_0+1}r^{-\lambda-1}\lambda d_1(\lambda)d\lambda- \int\limits_{\mathrm{Re }\lambda=l_0}r^{-\lambda-1}\lambda d_1(\lambda)d\lambda \right]=f(r),
\end{align*}
here the expression in the square brackets is equal to zero since $d_1(\lambda)$ is analytic in the strip \[\mathrm{Re } \lambda \in (l_0,l_0+1)\] (cf. \eqref{g7.4}).
This gives \eqref{g5.4} and hence \eqref{g1.4}.

\section{Appendix B}\label{s6}

Preliminarily we consider in plane $(\eta,\tau)$ the sets
\begin{align}\label{9.12}
Q_1&=\{ \eta\leq 0, - 2 \eta<\tau \}\cup \{ \eta>0,
\tau>-\frac 2 3\eta\},\nonumber\\
Q_2&=\{ \eta\leq 0, \tau<-\frac 2 3 \eta\}\cup \{ \eta>0,
\tau<- 2 \eta\},\nonumber\\
Q_3&=\{ \tau>0, -\frac 2 3\eta<\tau<-2\eta\}, \nonumber \\
Q_4&=\{ \tau<0, -2 \eta<\tau<-\frac 2 3\eta\}.
\end{align}
It follows immediately that
\begin{align}\label{9.13}
|\tau|\leq 2|\eta+\tau|, (\eta,\tau)\in Q_1\cup Q_2,\nonumber \\
|\tau|\geq 2|\eta+\tau|, (\eta,\tau)\in
Q_3\cup Q_4.
\end{align}

To deal with estimates of $\Phi(s,\zeta)$, we will partition the
plane $(\eta,\tau)$ into the following sets
\begin{align*}
&B_0=\{(\eta,\tau):\, |\eta|\leq 3M, |\tau|\leq 2M\}, \\
&B_1=\{(\eta,\tau):\, \eta> 3M, |\tau|\leq 2M\},  \\
&B_{1.1}=\{(\eta,\tau):\, -\frac 3 2\tau<\eta, \tau<-2M\},\\
&B_{1.2}=\{(\eta,\tau):\, \eta>0, \tau> 2M\},  \\
&B_{1.3}=\{(\eta,\tau):\, -\frac \tau 2<\eta<0, \tau> 2M\},\\
&B_{3.1}=\{(\eta,\tau):\, M-\tau<-\frac \tau 2, \tau> 2M\},\nonumber  \\
&B_{3.2}=\{(\eta,\tau):\, -M-\tau<M-\tau, \tau> 2M\},\nonumber  \\
&B_{3.3}=\{(\eta,\tau):\, -\frac 3 2\tau<\eta<-M-\tau, \tau> 2M\},\nonumber  \\
&B_{2.1}=\{(\eta,\tau):\, \eta<-\frac 32 \tau, \tau> 2M\},\nonumber  \\
&B_{2}=\{(\eta,\tau):\, \eta<-3M, |\tau|< 2M\},\nonumber \\
&B_{2.2}=\{(\eta,\tau):\, \eta<0, \tau<-2M\},\nonumber \\
&B_{2.3}=\{(\eta,\tau):\, 0<\eta<-\frac \tau 2, \tau<- 2M\},\nonumber  \\
&B_{4.1}=\{(\eta,\tau):\, -\frac \tau 2 <\eta<-\tau -M, \tau<-2M\},\nonumber  \\
&B_{4.2}=\{(\eta,\tau):\, -\tau-M<\eta<M-\tau, \tau<-2M\},\nonumber  \\
&B_{4.3}=\{(\eta,\tau):\, M-\tau<\eta< -\frac 3 2 \tau, \tau<-2M\}.\nonumber
\end{align*}
Here we choose $M>0$ sufficiently large to neglect $r_1(\lambda)$ in
\eqref{g6.10} and use \eqref{g6.17}. This construction was motivated
by \cite{BV1}, \cite{DV}.

By assumptions \eqref{g15.4}, we clearly have
\begin{equation}\label{4.6b}
    |\Phi(s,\zeta)|\leq C(M, \vartheta), \mbox{ in } B_0.
\end{equation}
Now estimate \eqref{g15.3} will follow from the four lemmas below.

\begin{lemma}\label{l11.1}
Under conditions \eqref{g15.4}, we have
\begin{equation}\label{11.4}
|\Phi(s,\zeta)|\leq C(M,\vartheta)\exp(-\pi |\eta|), \mbox{ in }
B_1, B_2.
\end{equation}
\end{lemma}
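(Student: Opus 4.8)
The plan is to use the factorization $\Phi(s,\zeta)=K(s)\,K_0(s+\zeta-1)\,\big(e^{i\pi\zeta}-e^{-i\pi\zeta}\big)^{-1}$, where $K_0(\lambda)=\big((\lambda+1)K(\lambda+1)\big)^{-1}$ (so that $K_0(s+\zeta-1)=\big((s+\zeta)K(s+\zeta)\big)^{-1}$), and then to estimate the three factors separately on $\Sigma_1$ and $\Sigma_7$. On these two sets \eqref{9.14} gives $|\eta|\geq 3M$ together with $|\tau|\leq 2M$; in particular $s=\sigma+i\tau$ runs over a fixed compact set, and $|\mathrm{Im}(s+\zeta)|=|\eta+\tau|\geq M$.

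The first step is the trigonometric factor. From $\sin\pi\zeta=\sin\pi\vartheta\cosh\pi\eta+i\cos\pi\vartheta\sinh\pi\eta$ one gets $|\sin\pi\zeta|^2=\sin^2\pi\vartheta+\sinh^2\pi\eta\geq\sinh^2\pi|\eta|$, whence $\big|e^{i\pi\zeta}-e^{-i\pi\zeta}\big|=2|\sin\pi\zeta|\geq 2\sinh\pi|\eta|\geq\tfrac12 e^{\pi|\eta|}$ once $M$ is large (as is assumed after \eqref{9.14}). Thus $\big|e^{i\pi\zeta}-e^{-i\pi\zeta}\big|^{-1}\leq 2e^{-\pi|\eta|}$, which is where all of the exponential decay in \eqref{11.4} comes from.

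It then suffices to show $|K(s)|\,|K_0(s+\zeta-1)|\leq C(M,\sigma,\vartheta)$ on $\Sigma_1\cup\Sigma_7$. Since \eqref{11.3} forces $\sigma\in(-\tfrac12,2)$, the point $s$ lies in a compact subset of the strip \eqref{6.6} on which $K$ is analytic, so $|K(s)|\leq C(M,\sigma)$. For the second factor, \eqref{11.3} gives $\mathrm{Re}(s+\zeta-1)=\sigma+\vartheta-1\in(-1,\tfrac12)$, so $s+\zeta-1$ lies in the strip \eqref{6.8} where $K_0$ is analytic, and since $|\mathrm{Im}(s+\zeta)|\geq M$ we are in the asymptotic regime of \eqref{6.11}--\eqref{6.12}. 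Taking $M$ large enough that the remainder $r_1$ is subordinate and the factor $\exp(-\mathrm{Im}(\cdot)\arg\omega(\cdot))$ is under control (the latter bounded above and below by \eqref{6.12}), and using $|\omega(s+\zeta)|\asymp|s+\zeta|\asymp 1+|\eta+\tau|$, the asymptotics of $K_0$ (of the same type as \eqref{6.11}) give
\[
|K_0(s+\zeta-1)|=\big|(s+\zeta)K(s+\zeta)\big|^{-1}\asymp(1+|\eta+\tau|)^{-(1/2+\sigma+\vartheta)}\leq 1,
\]
since $1/2+\sigma+\vartheta>0$ by \eqref{11.3} while $1+|\eta+\tau|\geq 1$. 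Combining the three bounds yields \eqref{11.4}.

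The step I expect to be the main obstacle is exactly the upper bound on $|K_0(s+\zeta-1)|$ — equivalently the lower bound on $|(s+\zeta)K(s+\zeta)|$ — along the line $\mathrm{Re}=\sigma+\vartheta$ as $|\mathrm{Im}(s+\zeta)|\to\infty$: one has to make sure the $O(1)$ remainder in the \eqref{6.11}-type expansions does not spoil the estimate (this is precisely why $M$ is taken large), and that on $\Sigma_1\cup\Sigma_7$ one stays away from the zeros and poles of $K_0$, all of which sit at real points while here $|\mathrm{Im}(s+\zeta)|\geq M$. The remaining work is the routine bookkeeping of the three factors.
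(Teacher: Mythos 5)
Your argument is correct and follows essentially the same route as the paper: the factor $(e^{i\pi\zeta}-e^{-i\pi\zeta})^{-1}$ supplies the decay $C(M,\vartheta)e^{-\pi|\eta|}$ (the paper's \eqref{11.5}), and the ratio $K(s)/\bigl((s+\zeta)K(s+\zeta)\bigr)$ is bounded on $\Sigma_1\cup\Sigma_7$ via the asymptotics \eqref{6.11} together with the bound $|(\tau+\eta)\arg\omega(s+\zeta)|\leq C$ from \eqref{6.12}, exactly as in \eqref{11.6}--\eqref{11.7}. The only cosmetic difference is that you obtain the power $|s+\zeta|^{-1/2-\sigma-\vartheta}$ where the paper writes $|s+\zeta|^{-3/2-\sigma-\vartheta}$; both are negative powers under \eqref{11.3}, so the conclusion is unaffected.
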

\begin{proof} It is clear that
\begin{equation}\label{11.5}
|K(s)|\leq C(M), \quad\left|\frac 1 {e^{i\zeta \pi}-e^{-i\zeta \pi}}\right|\leq
C(M,\vartheta)\exp(-\pi|\eta|), \mbox{ with fixed } \vartheta\in (0,1),
\end{equation}
in $B_1$, $B_2$. Now \eqref{g6.10}, \eqref{11.5} give
\begin{equation}\label{11.6}
|\Phi(s,\zeta)|\leq
C(M,\vartheta)|s+\zeta|^{-1/2-\sigma-\vartheta}\exp((\tau+\eta)\arg\omega(s+\zeta)-\pi|\eta|),\mbox{
in } B_1, B_2.
\end{equation}
Remark that $|\tau+\eta|\to \infty$, when
$|\eta|\to \infty$, and, by \eqref{g15.4}, \eqref{g6.17}, we get
\begin{align}\label{11.7}
\exp((\tau+\eta)\arg\omega(s+\zeta))\leq C(M), \nonumber \\
|s+\zeta|^{-1/2-\sigma-\vartheta}\leq C(M).
\end{align}
From the above it follows \eqref{11.4}.
\end{proof}

Since $|\tau|$ is bounded in $B_1$, $B_2$, \eqref{11.4} implies \eqref{g15.3} in these sets.

\begin{lemma}\label{l11.2}
Under conditions \eqref{g15.4}, we have
\begin{equation}\label{11.8}
|\Phi(s,\zeta)|\leq
C(M,\sigma,\vartheta)\tau^{-1-\vartheta}\exp(-(\pi-\varepsilon)
|\eta|), \mbox{ in } B_{1.1}, B_{1.2}, B_{1.3},
B_{2.1}, B_{2.2}, B_{2.3},
\end{equation}
here $\varepsilon$ is sufficiently small and depends on $M$.
\end{lemma}
\begin{proof}
 Our proof starts with observation that, by definition,
\begin{equation}\label{11.9}
|\tau+\eta|\geq M,
\end{equation}
and, by \eqref{9.13},
\begin{equation}\label{11.10}
\frac 1 {|\tau+\eta|}\leq \frac 2 {|\tau|}, \mbox{ in }
 B_{1.1}, B_{1.2}, B_{1.3},
B_{2.1}, B_{2.2}, B_{2.3}.
\end{equation}
We will consider separately the set $B_{1.2}$, where
$\mathrm{sgn \,} \tau=\mathrm{sgn \,}\eta$, and the set
$B_{1.3}$, where $\mathrm{sgn \,} \tau=-\mathrm{sgn
\,}\eta$.

From \eqref{g6.10} we deduce that
\begin{align}\label{12.1}
|\Phi(s,\zeta)|\leq
C(M,\vartheta)\exp(-\pi|\eta|)\left|\frac{\omega(s)}{\omega(s+\zeta)}\right|^{\sigma-1/2}\frac
1{|\omega(s+\eta|^{1+\vartheta}}\nonumber \\ \times\exp(-\tau
\arg\omega(s)+(\tau+\eta)\arg \omega(s+\zeta)).
\end{align}
We have
\begin{equation}\label{12.2}
\exp(-\tau \arg\omega(s)+(\tau+\eta)\arg
\omega(s+\zeta))=\exp(\eta\arg \omega(s+\zeta))\exp(\tau(
\arg\omega(s+\zeta)-\arg \omega(s))).
\end{equation}

We claim that $|\eta\arg \omega(s+\zeta)|$ and $|\tau(
\arg\omega(s+\zeta)-\arg \omega(s))|$ are uniformly bounded in
$B_{1.2}$, $B_{1.3}$.

For the first expression, estimate \eqref{g6.17} implies
\begin{equation}\label{12.3}
|\eta\arg \omega(s+\zeta)|\leq C(M)\frac{|\eta|}{|\eta+\tau|} \mbox{ in } B_{1.2}, B_{1.3}.
\end{equation}
In $B_{1.2}$ we get
\begin{equation}\label{12.4}
\frac{|\eta|}{|\eta+\tau|}\leq 1,
\end{equation}
since $\mathrm{sgn \,} \tau=\mathrm{sgn \,}\eta$. In
$B_{1.3}$, we have $|\eta|\leq \frac {|\tau|}2$ and
use \eqref{11.10}
\begin{equation}\label{12.5}
\frac{|\eta|}{|\eta+\tau|}\leq \frac{|\tau|}{2|\eta+\tau|}\leq
\frac{|\tau|}{|\tau|}=1.
\end{equation}

In order to estimate $|\tau( \arg\omega(s+\zeta)-\arg \omega(s))|$
we use the formula
\begin{equation}\label{12.8}
\arctan a -\arctan b= \arctan \frac{a-b}{1+ab}.
\end{equation}
and \eqref{g6.17}. Then
\begin{align}\label{12.7}
\arg \omega(s)=\arctan\frac{\Psi_1(s)}{\Psi_2(s)},\quad \arg
\omega(s+\zeta)=\arctan\frac{\Psi_1(s+\zeta)}{\Psi_2(s+\zeta)},\nonumber
\\ \Psi_1(s)=\tau\sin 2\pi\sigma-\sigma\sinh 2\pi \tau,\quad \Psi_2(s)=\sigma\sin 2\pi\sigma+\tau\sinh 2\pi
\tau.
\end{align}
Along this way we deduce in $B_{1.2}$,
$B_{1.3}$
\begin{align}\label{12.9}
\arg \omega(s+\zeta)-\arg \omega(s)=\arctan \Psi_0(s,s+\zeta),\nonumber \\
\Psi_0(s,s+\zeta)=\frac{\Psi_1(s)\Psi_2(s+\zeta)-\Psi_1(s+\zeta)\Psi_2(s)}{\Psi_2(s)\Psi_1(s+\zeta)+\Psi_1(s)\Psi_2(s+\zeta)}.
\end{align}
A detailed analysis of the function $\Psi_0(s,s+\zeta)$ shows that
the following estimate holds
\begin{align}\label{13.2}
|\Psi_0(s,s+\zeta)|\leq C_1(M,\sigma,\vartheta)\frac
{|(\sigma+\vartheta)\tau-\sigma(\tau+\eta|}{|\tau||\tau+\eta|}+\nonumber
\frac{C_2(M,\sigma,\vartheta)}{|\tau|}
\\
\leq C_1(M,\sigma,\vartheta)\left[\frac 1 {|\tau+\eta|}+\frac
{|\eta|}{|\tau||\tau+\eta|}\right]+\frac{C_2(M,\sigma,\vartheta)}{|\tau|}.
\end{align}
In $\Sigma^{\prime}_2$ we have $\tau>2M$, $\eta>0$ and therefore
\begin{equation}\label{13.3} |\Psi_0(s,s+\zeta)|\leq
 C_1(M,\sigma,\vartheta)\left[\frac 1 {|\tau|}+\frac
{|\eta|}{|\tau||\eta|}\right]+\frac{C_2(M,\sigma,\vartheta)}{|\tau|}\leq
\frac{C(M,\sigma,\tau)}{|\tau|}.
\end{equation}
By \eqref{9.13}, we have $\frac 1 {|\eta+\tau|}\leq \frac 2
{|\tau|}$ in $\Sigma^{\prime\prime}_2$ and, besides,
$-|\eta|>-\frac{|\tau|}{2}$. Thus we get
\begin{equation}\label{13.4}
|\Psi_0(s,s+\zeta)| \leq C_1(M,\sigma,\vartheta) \frac 2 {|\tau|}
\left[1+\frac
{|\eta|}{|\tau|}\right]+\frac{C_2(M,\sigma,\vartheta)}{|\tau|}\leq
\frac{C(M,\sigma,\tau)}{|\tau|}.
\end{equation}
Combining of \eqref{12.9}, \eqref{13.2}, \eqref{13.3} yields
\begin{equation}\label{13.5}
|\tau( \arg\omega(s+\zeta)-\arg \omega(s))|\leq C(M,\sigma,\vartheta),
\mbox{ in } B_{1.2}, B_{1.3}.
\end{equation}
From \eqref{12.2},\eqref{12.4}, \eqref{12.5}, \eqref{13.5} we obtain
\begin{equation}\label{13.6}
|\exp(-\tau \arg\omega(s)+(\tau+\eta)\arg \omega(s+\zeta))|\leq
C(M,\sigma,\vartheta)
\end{equation}
Taking into account \eqref{12.1}, \eqref{13.6}, we conclude that
\eqref{11.8} is valid in $B_{1.2}$, $B_{1.3}$. In a similar fashion one can consider the
sets $B_{1.1}$,   $B_{2.1}$, $B_{2.2}$,
$B_{2.3}$.
\end{proof}

\begin{lemma}\label{l13.1}
We have
\begin{equation}\label{13.7}
|\Phi(s,\eta)|\leq C(M,\sigma,\vartheta)\exp(-(\frac \pi
2-\varepsilon)|\eta|)\exp(-\frac \pi 4|\tau|), \mbox{ in } B_{3.1},
B_{3.3}, B_{4.1},
B_{4.3},
\end{equation}
under assumptions \eqref{g15.4}.
\end{lemma}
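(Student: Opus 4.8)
The plan is to repeat the scheme of the proof of Lemma~\ref{l11.2}, the point being that on $\Sigma_3,\Sigma_5,\Sigma_9,\Sigma_{11}$ we need no polynomial gain in $|\tau|$, only exponential decay in $|\eta|$. First I would record the elementary geometric facts read off directly from \eqref{9.14}: on each of these four sets $|\eta+\tau|>M$ while $|\eta+\tau|<\tfrac12|\tau|$, and therefore $\tfrac12|\tau|\le|\eta|\le\tfrac32|\tau|$; in particular $|\tau|\le 2|\eta|$, and, $M$ being large, both $|\tau|$ and $|\eta+\tau|$ are large. Since \eqref{11.3} keeps $s$ in the strip \eqref{6.6} and $s+\zeta$ in the strip \eqref{6.7}, away from the poles of $K$ and of $1/K$, the asymptotic formulas \eqref{6.11}, \eqref{6.12} may be used on the relevant pieces of the lines $\mathrm{Re}\,s=\sigma$ and $\mathrm{Re}(s+\zeta)=\sigma+\vartheta$; combining them with \eqref{11.5} and with $|\omega(\lambda)|\asymp|\mathrm{Im}\,\lambda|$ (valid for $|\mathrm{Im}\,\lambda|$ large, $\mathrm{Re}\,\lambda$ bounded) I obtain, exactly as in \eqref{12.1},
\[
|\Phi(s,\zeta)|\le C(M,\vartheta)\exp(-\pi|\eta|)\left|\frac{\omega(s)}{\omega(s+\zeta)}\right|^{\sigma-1/2}\frac{1}{|\omega(s+\zeta)|^{1+\vartheta}}\exp\bigl(-\tau\arg\omega(s)+(\tau+\eta)\arg\omega(s+\zeta)\bigr).
\]

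Next I would bound the last two factors. By \eqref{6.12}, $|\tau\arg\omega(s)|\le C$ and, since $|\tau+\eta|>M$ is large, also $|(\tau+\eta)\arg\omega(s+\zeta)|\le C$, so the last exponential is $\le C(M)$. For the algebraic factor I use $|\omega(s)|\asymp|\tau|$ and $|\omega(s+\zeta)|\asymp|\tau+\eta|\ge M$: if $\sigma\ge1/2$ then $\bigl|\omega(s)/\omega(s+\zeta)\bigr|^{\sigma-1/2}\le C(|\tau|/M)^{\sigma-1/2}$, and if $\sigma<1/2$ this factor is bounded because $|\tau+\eta|\le|\tau|$; hence in either case the algebraic factor is $\le C(M,\sigma,\vartheta)(1+|\tau|)^{N}$ with $N=\max\{\sigma-1/2,0\}\ge0$. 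Feeding these into the displayed inequality and using $|\tau|\le2|\eta|$ together with $(1+|\tau|)^{N}\le C_\varepsilon\exp(\varepsilon|\eta|)$, I arrive at
\[
|\Phi(s,\zeta)|\le C(M,\sigma,\vartheta)\exp\bigl(-(\pi-\varepsilon)|\eta|\bigr),
\]
which implies \eqref{13.7} since $\pi-\varepsilon\ge(\tfrac\pi2-\varepsilon)+\tfrac\pi4$. The sets $\Sigma_9$ and $\Sigma_{11}$, where $\tau<-2M$, are handled in the same way with $\tau$ replaced by $|\tau|$.

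Thus the estimate itself is essentially routine once the set-up of Lemma~\ref{l11.2} is in place; the only point I would treat with care — and the real obstacle — is justifying that the expansions \eqref{6.11}, \eqref{6.12} may be applied uniformly on the pieces of the two vertical lines that intersect $\Sigma_3,\Sigma_5,\Sigma_9,\Sigma_{11}$. This is exactly where one needs both \eqref{11.3} (so that $s$ and $s+\zeta$ stay in the strips \eqref{6.6}, \eqref{6.7} where $K$, resp.\ $1/K$, is analytic and the asymptotics hold) and the largeness of $M$ (so that the remainder $r_1(\lambda)$ in \eqref{6.11} is negligible and the quantities $|\tau|\,|\arg\omega(s)|$, $|\tau+\eta|\,|\arg\omega(s+\zeta)|$ are uniformly bounded). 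Everything else is the book-keeping already carried out in Lemma~\ref{l11.2}.
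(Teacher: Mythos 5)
Your proof is correct and follows essentially the same route as the paper: both start from the asymptotic bound \eqref{6.11}--\eqref{6.12} in the form \eqref{12.1}/\eqref{13.8}, show the exponential of the $\arg\omega$ terms is bounded (you do this by a direct triangle inequality using $|\arg\omega(\lambda)|\le C/|\lambda_2|$ together with $|\tau+\eta|>M$, a slight simplification of the paper's regrouping in \eqref{14.1}), absorb the remaining polynomial factor into $\exp(\varepsilon|\eta|)$, and split the decay between $|\eta|$ and $|\tau|$ via $\tfrac12|\tau|\le|\eta|\le\tfrac32|\tau|$. No gaps.
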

\begin{proof} In $B_{3.1},
B_{3.3}, B_{4.1},
B_{4.3},$ the inequality $|\tau|\leq
2|\eta+\tau|$ no longer works as in the previous lemmas but we can
still suggest that $|\tau|$, $\eta$, $|\tau+\eta|$ are large and use
asymptotics \eqref{g6.10}
\begin{align}\label{13.8}
|\Phi(s,\zeta)|\leq
C(M,\sigma,\vartheta)\exp(-\pi|\eta|)\frac{|\eta|^{\sigma-1/2}}{|\omega(s+\zeta)|^{1+\vartheta}}\exp(-\tau
\arg\omega(s)+(\tau+\eta)\arg \omega(s+\zeta)).
\end{align}
With the help of calculations as above we infer
\begin{align}\label{14.1}
|(\tau+\eta)\mathrm{\, \arg \,} \omega(s+\zeta)-\mathrm{\, \arg \,} \omega(s)|=|(\tau+\eta)(\arg \omega(s+\zeta)-\tau \mathrm{\, \arg \,}
\omega(s))+\eta \mathrm{\, \arg \,} \omega(s)|\nonumber \\ \leq
C(M,\sigma,\vartheta)|\tau+\eta|\left[
\frac{|\tau|}{|\tau||\tau+\eta|}
+\frac{|\eta|}{|\tau||\tau+\eta|}\right]\leq C(M,\sigma,\vartheta)\left(1+\frac{|\eta|}{|\tau|}\right).
\end{align}
By definitions of considered sets, we have
\begin{equation}\label{14.2}
\frac{|\eta|}{|\tau|}\leq \frac 3 2, \exp(-|\eta|)\leq
C(M)\exp(-\frac {|\tau|}2).
\end{equation}
Applying \eqref{13.8}, \eqref{14.1}, \eqref{14.2} gives
\begin{align}\label{14.3}
|\Phi(s,\zeta)|&\leq
C(M,\sigma,\vartheta)|\eta|^{\sigma-1/2}\exp(-\frac \pi
2|\eta|)\exp(-\frac \pi 4|\tau|)\nonumber \\
&\leq C(M,\sigma,\vartheta)|\exp(-(\frac \pi
2-\varepsilon)|\eta|)\exp(-\frac \pi 4|\tau|).
\end{align}
This completes the proof of \eqref{13.7}.
\end{proof}

\begin{lemma}\label{l14.1} Let $\sigma$, $\vartheta$ satisfy
\eqref{g15.4}. Then
\begin{equation}\label{14.4}
|\Phi(s,\zeta)|\leq C(M,\sigma,\vartheta) \exp(-(\frac \pi
2-\varepsilon)|\tau|)\exp(-\frac \pi 2|\eta|), \mbox{ in } B_{3.2},
B_{4.2}.
\end{equation}
\end{lemma}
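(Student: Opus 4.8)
The plan is to exploit the defining feature of $\Sigma_4$ and $\Sigma_{10}$: on these sets one has $|\tau+\eta|\le M$, so the shifted argument $s+\zeta=(\sigma+\vartheta)+i(\tau+\eta)$ is confined to a \emph{bounded} region, while at the same time $|\tau|>2M$ is large and hence $|\eta|\ge|\tau|-M$ is large as well. I will estimate the three factors of $\Phi$ in \eqref{9.10} separately and then combine them.

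First, consider $\bigl[(s+\zeta)K(s+\zeta)\bigr]^{-1}$. By \eqref{11.3} the real part $\sigma+\vartheta$ lies in $(0,3/2)$, a compact subinterval of the strip $-1<\mathrm{Re}\,\lambda<3/2$ in which $1/K$ is analytic by \eqref{6.7}; moreover $\mathrm{Re}(s+\zeta)=\sigma+\vartheta>0$ keeps $s+\zeta$ bounded away from the origin. Hence the continuous function $\lambda\mapsto\bigl[\lambda K(\lambda)\bigr]^{-1}$ is bounded on the segment $\{\mathrm{Re}\,\lambda=\sigma+\vartheta,\ |\mathrm{Im}\,\lambda|\le M\}$, which gives $\bigl|(s+\zeta)K(s+\zeta)\bigr|^{-1}\le C(M,\sigma,\vartheta)$. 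Second, \eqref{11.5} yields $\bigl|e^{i\pi\zeta}-e^{-i\pi\zeta}\bigr|^{-1}\le C(M,\vartheta)\exp(-\pi|\eta|)$.

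For the numerator $K(s)$ I use the asymptotic formulas \eqref{6.11}, \eqref{6.12}, which apply here since $|\tau|>2M$ is large (recall $M$ is chosen large for precisely this purpose). Then $|\omega(s)|=|s||\cot s\pi|$ is comparable to $|\tau|$ and $|\tau\arg\omega(s)|\le C(M,\sigma)$, so \eqref{6.11} gives
\[
|K(s)|\le C(M,\sigma)\,(1+|\tau|)^{|\sigma-1/2|},
\]
i.e.\ $K(s)$ grows at most polynomially in $|\tau|$. Multiplying the three estimates,
\[
|\Phi(s,\zeta)|\le C(M,\sigma,\vartheta)\,(1+|\tau|)^{|\sigma-1/2|}\exp(-\pi|\eta|).
\]
To reach the stated form, I split $\exp(-\pi|\eta|)=\exp(-\tfrac\pi2|\eta|)\exp(-\tfrac\pi2|\eta|)$ and use $|\eta|\ge|\tau|-M$ in the second factor to get $\exp(-\tfrac\pi2|\eta|)\le e^{\pi M/2}\exp(-\tfrac\pi2|\tau|)$; then the polynomial $(1+|\tau|)^{|\sigma-1/2|}$ is absorbed into $\exp(\varepsilon|\tau|)$ for any prescribed $\varepsilon>0$ (the constant then depending on $\varepsilon$), and \eqref{14.4} follows.

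The only point that genuinely requires care is the first factor: one must verify that the hypothesis \eqref{11.3} forces $\sigma+\vartheta$ to lie strictly inside $(-1,3/2)$ and away from $0$, so that $1/K(s+\zeta)$ is truly bounded on the relevant segment. Apart from that, this is the easiest of the four lemmas, because the constraint $|\tau+\eta|\le M$ trivializes the dependence on $s+\zeta$ and removes any need for the delicate cancellations used in Lemmas \ref{l11.2}--\ref{l13.1}.
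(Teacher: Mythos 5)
Your proof is correct and follows essentially the same route as the paper's: the paper likewise uses $|\tau+\eta|\le M$ to confine $s+\zeta$ to a bounded set, bounds $1/(e^{i\pi\zeta}-e^{-i\pi\zeta})$ by $C\exp(-\pi|\eta|)$, controls $K(s)$ via the asymptotics \eqref{6.11}--\eqref{6.12} as $|\omega(s)|^{\sigma-1/2}\exp(\tau\arg\omega(s))$ with $|\tau\arg\omega(s)|$ bounded, and then trades half of $\exp(-\pi|\eta|)$ for $\exp(-(\frac\pi2-\varepsilon)|\tau|)$ using \eqref{14.5}--\eqref{14.6}. Your explicit verification that $\sigma+\vartheta\in(0,3/2)$ keeps $1/((s+\zeta)K(s+\zeta))$ away from its pole at the origin and inside the strip \eqref{6.7} is a detail the paper leaves implicit.
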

\begin{proof} We observe that $|\tau+\eta|\leq M$ in $B_{3.2}$,
$B_{4.2}$, or, in more details
\begin{align}\label{14.5}
-M-|\tau|<-|\eta|<M-|\tau|, \mbox{ in } B_{3.2},\nonumber \\
|\tau|-M<|\eta|<M+|\tau|, \mbox{ in } B_{4.2},
\end{align}
and, consequently,
\begin{equation}\label{14.6}
\exp(-|\eta|)\leq C(M) \exp(-|\tau|).
\end{equation}
By \eqref{g6.10}, \eqref{g6.14}, it can be easily verified that
\begin{align}\label{14.7}
|\Phi(s,\zeta)|\leq C(M,\sigma,\vartheta)\exp(-\pi
|\eta|)|\omega(s)|^{\sigma-1/2}\exp(-\tau \arg \omega(s))\nonumber \\
\leq C(M,\sigma,\vartheta)\exp(-\frac \pi 2 |\eta|)\exp(-(\frac \pi
2 -\varepsilon)|\tau|),
\end{align}
in $B_{3.2}$, $B_{4.2}$.
\end{proof}

Summing up estimates \eqref{4.6b}, \eqref{11.4}, \eqref{11.8}, \eqref{13.7},
\eqref{14.4}, we finish the proof of \eqref{g15.3}.

\subsection*{Acknowledgments}
This work was partially supported by a grant from the Simons Foundation (SFI-PD-Ukraine-00017674, Mykola Krasnoshchok).


\begin{thebibliography}{99}







\bibitem{ABH} Ph. Angot, F. Boyer, F. Hubert,  Asymptotic and
numerical modelling of flows in fractured porous media. M2AN Math.
Model. Numer. Anal. \textbf{43}(2) (2009), 239-275.

\bibitem{ACN} S. Ariche, C. De Coster, S. Nicaise,  Regularity of solutions of elliptic problems with a
curved fracture, J. Math. Anal. Appl., \textbf{447}(2) (2017),
908-932.



\bibitem{BF} B.V. Bazaliy, A. Friedman, The Hele-Shaw problem with
surface tension in a half-plane, J. Differential Equations,
\textbf{216} (2005), 439-469.

\bibitem{BV1} B.V. Bazaliy, N. Vasylyeva, The transmission problem in
domains with a corner point for the Laplace operator in weighted
H\"{o}lder spaces, J. Differential Equations, \textbf{249} 2010,
2476-2499.

\bibitem{BK1}  M. Borsuk, V. Kondratiev, Elliptic boundary value problems of second order in piecewise smooth domains
, Elsevier Science B.V., Amsterdam, 2006.


\bibitem{CD} M. Costabel, M. Dauge,
Crack Singularities for General Elliptic Systems, Adv. Math.,
\textbf{235}(1) (2002), 29-49.

\bibitem{CDN} M. Costabel, M. Dauge, S. Nicaise;
Mellin analysis of weighted Sobolev spaces with nonhomogeneous norms
on cones, Int. Math. Ser., \textbf{11} (2010), Springer, New York,
105-136.





\bibitem{DG1} M. Dauge, Elliptic boundary value problems on corner domains,
Lecture Notes in Mathematics, \textbf{1341} Springer-Verlag, Berlin,
1988.

\bibitem{DV} R. Dzhafarov, N. Vasylyeva,  Boundary value problems
governed by superdiffusion in the right angle: existence and
regularity, J. Math., Art. ID 5395124,  (2018) 29 p.

\bibitem{BE} A. Erdelyi, W. Magnus, F. Oberhettinger, F.G. Tricomi,
Tables of integral transforms, Vol. I.,  McGraw-Hill Book Co., Inc.,
New York-Toronto-London, 1954.

\bibitem{GSV} M. Garroni, V.A. Solonnikov, M. Vivaldi, Green function for the heat equation with oblique boundary conditions in an angle, Annali della Scuola Normale Superiore di Pisa, Classe di Scienze 4 serie, \textbf{25},
no 3-4 (1997), p. 455-485


\bibitem{GR} P. Grisvard, Elliptic problems in nonsmooth domains, Pitman, Boston, MA,
1985.

\bibitem{JBJ} B.J. Jin, Existence of quasi-stationary Stokes flow in a dihedral domain,
J. Korean Math. Soc.,   \textbf{43}(4) (2006), 733-763.

\bibitem{KS1} D. Kapanadze and B.-W Schulze. Crack theory and edge
singularities, Kluwer Academic Publishers Group, Dordrecht, 2003.

\bibitem{KHVK} A.M. Khludnev, V.A. Kovtunenko, Analysis of Cracks in Solids, WIT-Press,
Southampton, Boston, 2000.

\bibitem{KHKZ} A.M. Khludnev, V.A. Kozlov, Asymptotics of solutions near
crack tips for Poisson equation with inequality type boundary
conditions, Z. Angew. Math. Phys., \textbf{59}(2) (2008), 264-280.

\bibitem{KO1} V. A. Kondrat'ev, Boundary value problems for elliptic equations
in domains with conical or angular points,  Trudy Moskov. Mat.
Obsc., \textbf{16} (1967), 209-292. (Russian).



\bibitem{KMR2} V.A. Kozlov, V. G. Maz'ya, J. Rossmann, Elliptic boundary
value problems in domains with point singularities. American
Mathematical Society, Providence, RI, 1997.

\bibitem{KT1} P.A. Krutitskii, The Dirichlet-Neumann harmonic problem in a
two-dimensional cracked domain with the Neumann condition on cracks,
Proc. R. Soc. Lond. Ser. A Math. Phys. Eng. Sci. \textbf{460}
(2004), no. 2042, 445-462.

\bibitem{KS2} A. Kufner and A.-M. S\"{a}ndig, Some applications of
weighted Sobolev spaces, Teubner-Texte Math., \textbf{100}, Teubner,
Leipzig 1987.

\bibitem{M3} A.I. Markushevich, Theory of Functions of a Complex Variable,
Vol.3, Prentice-hall Inc., New York, 1965.

\bibitem{MJR} V. Martin, J. Jaffre, Jean E. Roberts, Modeling
fractures and barriers as interfaces for flow in porous media. SIAM
J. Sci. Comput. \textbf{26}(5) (2005), 1667-1691.

\bibitem{MN1} A. Mazzucato, V. Nistor, Well-posedness
and regularity for the elasticity equation with mixed boundary
conditions on polyhedral domains and domains with cracks, Arch.
Ration. Mech. Anal., \textbf{195}(1) (2010) 25-73.


\bibitem{M1} D. Medkova, The third problem for the Laplace equation on a
planar cracked domain with modified jump conditions on cracks, J.
Integral Equations Appl., \textbf{18}(4) (2006),  471-507.

\bibitem{M2} D. Medkova, The solution of the third problem for the
Laplace equation on planar domains with smooth boundary and inside
cracks and modified jump conditions on cracks, Int. J. Math. Math.
Sci., Art. ID 91983  (2008) 14 pp.

\bibitem{NZ} S. Nazarov, B. Plamenevsky, Elliptic problems in domains with piecewise smooth boundaries, De Gruyter, Berlin, New York,  1994.

\bibitem{PK1}  R.B. Paris, D. Kaminski,  Asymptotics and Mellin-Barnes integrals.
Encyclopedia of Mathematics and its Applications, \textbf{85},
Cambridge University Press, Cambridge, 2001.



\bibitem{SF1} V.A. Solonnikov, E. V. Frolova, A problem with the
third boundary condition for the Laplace equation in a plane angle
and its applications to parabolic problems. J. Soviet Math.,
\textbf{62} (1992) 2819–2831.

\bibitem{SF2} V.A. Solonnikov, E. V. Frolova,  Investigation of a problem
for the Laplace equation with a boundary condition of a special form
in a plane angle. Leningrad Math. J., \textbf{2} (1991), 891–916.

\bibitem{T1} E. C. Titchmarsh,  Introduction to the theory of Fourier
integrals, Chelsea Publishing Co., New York, 1986.

\end{thebibliography}
\end{document}